 \DeclareMathOperator{\perm}{Sym}
\DeclareMathOperator{\aut}{Aut}
 \DeclareMathOperator{\psl}{PSL}
\DeclareMathOperator{\pgl}{PGL}
\DeclareMathOperator{\gl}{GL} \DeclareMathOperator{\psu}{PSU}
\DeclareMathOperator{\alt}{Alt}
\renewcommand{\emptyset}{\varnothing}
\newtheorem{thm}{Theorem}%[section]
 \newtheorem{lemma}[thm]{Lemma}
\numberwithin{equation}{section}
\renewcommand{\footnote}{\endnote}
\newcommand{\ignore}[1]{}\makeglossary
\begin{document}
\title[Chirality and finite simple groups]{Hypermaps over non-abelian simple groups and strongly symmetric generating sets}
\author[A. Lucchini]{Andrea Lucchini}
\address{Andrea Lucchini, Dipartimento di Matematica ``Tullio Levi-Civita'',
 University of Padova, Via Trieste 63, 35121 Padova, Italy} 
\email{lucchini@math.unipd.it}
         
\author[P. Spiga]{Pablo Spiga}
\address{Pablo Spiga, Dipartimento di Matematica Pura e Applicata,
 University of Milano-Bicocca, Via Cozzi 55, 20126 Milano, Italy} 
\email{pablo.spiga@unimib.it}
\subjclass[2010]{primary 05C10, 05C25; secondary 20B25}
\keywords{chirality, hypermap, strongly symmetric, reflexible}        
	\maketitle

        \begin{abstract}

A generating pair $x, y$ for a group $G$ is said to be \textbf{\textit{symmetric}} if there exists an
automorphism $\varphi_{x,y}$ of $G$ inverting both $x$ and $y$, that is, $x^{\varphi_{x,y}}=x^{-1}$ and $y^{\varphi_{x,y}}=y^{-1}$. Similarly, a group $G$ is said to be \textbf{\textit{strongly symmetric}} if $G$ can be generated with two elements and  if all  generating pairs of $G$ are
symmetric. 

In this paper we classify the finite strongly symmetric non-abelian simple groups. Combinatorially, these are the finite non-abelian simple groups $G$ such that every orientably regular hypermap with
monodromy group $G$ is reflexible.
          \end{abstract}
\section{Introduction}

\bibliographystyle{amsplain}
The aim of this note is to classify the finite strongly symmetric non-abelian simple groups.

\begin{thm}\label{thm:thm}
Let $S$ be a finite non-abelian simple group. Then $S$ is strongly symmetric if and only if $S\cong \psl(2,q)$ for some prime power $q$.
\end{thm}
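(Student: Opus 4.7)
The plan is to prove the two implications separately, using Macbeath's trace-parameter analysis of $\psl(2,q)$ for the forward direction and the classification of finite simple groups for the converse.

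For the ``if'' direction, let $x,y$ generate $\psl(2,q)$ and choose lifts $\tilde x,\tilde y\in\ssl(2,q)$. A classical theorem of Macbeath asserts that an absolutely irreducible pair in $\ssl(2,q)$ is determined up to $\pgl(2,q)$-conjugacy by the trace triple $(\mathrm{tr}\,\tilde x,\mathrm{tr}\,\tilde y,\mathrm{tr}\,\tilde x\tilde y)$. Because $\mathrm{tr}(A^{-1})=\mathrm{tr}(A)$ in $\ssl(2,q)$ and $\mathrm{tr}(\tilde x^{-1}\tilde y^{-1})=\mathrm{tr}((\tilde y\tilde x)^{-1})=\mathrm{tr}(\tilde x\tilde y)$, the pairs $(\tilde x,\tilde y)$ and $(\tilde x^{-1},\tilde y^{-1})$ share this trace triple; hence they are $\pgl(2,q)$-conjugate, and any conjugating element supplies the required $\varphi_{x,y}\in\pgl(2,q)\le\aut(\psl(2,q))$.

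For the converse, I would begin with a general reduction: if $(x,y)$ generates a non-abelian simple group $S$ and $\varphi\in\aut(S)$ inverts both $x$ and $y$, then $\varphi^2$ centralises $\langle x,y\rangle=S$, so $\varphi$ has order dividing $2$ in $\aut(S)$. Writing $C_\varphi:=\{g\in S:g^\varphi=g^{-1}\}$, the bijection $g\mapsto g\varphi$ identifies $C_\varphi$ with the set of involutions of the coset $S\varphi$ inside $S\rtimes\langle\varphi\rangle$, whose cardinality is controlled by the Kawanaka--Matsuyama extension of the Frobenius--Schur indicator. Hence the number of symmetric generating pairs is at most $\sum_\varphi|C_\varphi|^2$, the sum ranging over involutions of $\aut(S)$; it then suffices to beat this estimate with the total number of generating pairs of $S$.

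The remainder proceeds by case-checking through the classification. For $\alt(n)$ with $n\ge 7$ (the cases $\alt(5)\cong\psl(2,5)$ and $\alt(6)\cong\psl(2,9)$ being already handled), cycle-type invariants in $\aut(\alt(n))\cong\sym(n)$ produce explicit pairs whose inverses lie in distinct orbits. For the sporadic groups, standard generators from the ATLAS supply concrete pairs on which every involutory outer automorphism can be checked directly. For Lie-type groups $S\not\cong\psl(2,q)$, suitably generic semisimple elements witness that the full automorphism group, with its diagonal, field and graph components, cannot simultaneously invert a carefully chosen pair. The main obstacle will be this last family: one must uniformly exclude every involutory automorphism as a candidate inverter across Lie rank and characteristic, and the most robust attack combines Liebeck--Shalev lower bounds on the number of generating pairs of $S$ with uniform upper bounds on $|C_\varphi|$ to make the counting inequality strict.
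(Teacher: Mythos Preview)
Your ``if'' direction matches the paper's: both appeal to Macbeath's result that the trace triple $(\mathrm{tr}\,\tilde x,\mathrm{tr}\,\tilde y,\mathrm{tr}\,\tilde x\tilde y)$ determines an irreducible pair in $\ssl(2,q)$ up to $\pgl(2,q)$-conjugacy, so that $(x,y)$ and $(x^{-1},y^{-1})$ are conjugate in $\pgl(2,q)\le\aut(\psl(2,q))$.

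For the converse you take a genuinely different route. The paper does \emph{not} run a case analysis across the whole classification. Instead it quotes a theorem of Leemans and Liebeck \cite{LeLi} which already produces, for every finite simple $T\notin\{\alt(7),\psl(2,q),\psl(3,q),\psu(3,q)\}$, a generating pair $(x,s)$ with $s^2=1$ that no involutory automorphism inverts. This collapses the problem to three residual families: $\alt(7)$ is dispatched with an explicit pair of $7$-cycles whose inverting sets in $\sym(7)$ are listed and seen to be disjoint, while $\psl(3,q)$ and $\psu(3,q)$ are handled by choosing two Singer-cycle generators $h,y$ whose sets of inverting involutions are provably disjoint (via a short bipartite counting argument comparing $|\Delta_H|=q^2\pm q+1$ with the class size of the graph or field involution), after which the maximal-subgroup tables force $\langle h,y\rangle=S$.

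Your global strategy of bounding the symmetric pairs by $\sum_\varphi|C_\varphi|^2$ and beating this with Liebeck--Shalev is a legitimate alternative and would make the argument independent of \cite{LeLi}, but you should be aware that it essentially reproves a large portion of that paper and is tight precisely where the present paper has to work hardest. In $\psl(3,q)$, for instance, the conjugacy class of the graph involution $\iota$ has size of order $q^5$ and $|C_\iota|$ (the symmetric elements of $S$) is also of order $q^5$, so this single class already contributes roughly $q^{15}$ to your sum against about $q^{16}$ generating pairs; the inner-involution contribution is of comparable order. You will therefore not escape a separate treatment of small $q$, and the asymptotic Liebeck--Shalev bound will have to be made effective. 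The paper's approach buys a short proof by outsourcing the bulk of the classification to \cite{LeLi}; yours would be more self-contained but substantially longer to execute, and as written the Lie-type case remains a plan rather than a proof.
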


 Interest on strongly symmetric groups stems from maps and hypermaps, which (roughly speaking) are embeddings of graphs on surfaces, see~\cite{CM}. We  give a brief account on this connection, for more details see~\cite{chira,LeLi}. 

A map on a surface is a decomposition of a closed connected surface into vertices, edges and faces. The vertices and edges of this decomposition form the underlying graph of the map. An automorphism of a map is an automorphism of the underlying graph which can be extended to a homeomorphism of the whole surface. For the definition of hypermaps, which bring us closer to strongly symmetric groups, we need to take a more combinatorial point of view. 

Each map on a orientable surface can be described by two permutations, usually denoted by $R$ and $L$, acting on the set of directed edges (that is, ordered pairs of adjacent vertices) of the underlying graph. The permutation $R$ permutes cyclically the directed edges starting from a given vertex and preserving a chosen orientation of the surface. The permutation $L$ interchanges the end vertices of a given directed edge. The monodromy group of the surface is the group generated by $R$ and $L$ and the map is said to be regular if the monodromy group acts regularly, that is, the identity is the only permutation fixing some element. 

Observe that in a map, we have $L^2=1$. A hypermap is simply given by the combinatorial data $R$ and $L$, where $L$ is not necessarily an involution. Inspired by the topological and geometrical counterpart for maps, a hypermap is said to be reflexible if the assigment $R\mapsto R^{-1}$ and $L\mapsto L^{-1}$ extends to a group automorphism of $\langle R,L\rangle$; otherwise the hypermap is said to be chiral.

It was shown in~\cite[Lemma~7]{chira}, that a finite group $G$ is strongly symmetric if and only if every orientably regular hypermap with monodromy group $G$ is reflexible. In particular, Theorem~\ref{thm:thm} classify the finite non-abelian simple groups $G$ with the property that every orientably regular hypermap with monodromy group $G$ is reflexible.

In our opinion, Theorem~\ref{thm:thm} suggests a natural problem, which in principle should give a measure of how chirality is abundant among regular hypermaps. Let $S$ be a non-abelian simple group and let $\delta(S)$ be the proportion of strongly symmetric generating sets of $S$, that is,
$$\delta(S):=\frac{|\{(x,y)\in S\times S\mid x,y\hbox{ symmetric generating set}\}|}{|\{(x,y)\in S\times S\mid S=\langle x,y\rangle\}|}.$$ 
The closer $\delta(S)$ is to $1$, the more abundant reflexible hypermaps are among  orientably regular hypermaps with monodromy group $S$. Indeed, Theorem~\ref{thm:thm} classifies the groups $S$ attaining $1$. 
We do not have any ``running conjecture'', but we wonder whether statistically it is frequent the case that $\delta(S)<1/2$. Moreover, we wonder whether it is statistically significant the case that $\delta(S)\to 0$ as $|S|\to \infty$, as $S$ runs through a certain family of non-abelian simple groups.
 
\section{Proof of Theorem~$\ref{thm:thm}$}
We start with a preliminary lemma.
\begin{lemma}\label{lem:lem}
Let $n$ be an integer with $n\ge 3$, let $g\in \mathrm{GL}(n,q)$ be a Singer cycle of order $q^n-1$, let $x:=g^{\gcd(n,q-1)}$ and let $a\in \Gamma\mathrm{L}(n,q)$ such that $x^a=zx^{\varepsilon}$, for some $z\in {\bf Z}(\mathrm{GL}(n,q))$ and $\varepsilon\in \{-1,1\}$. Then $z=1$, $\varepsilon =1$ and $a\in \langle g\rangle$.
\end{lemma}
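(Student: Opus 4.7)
My plan is a three-step reduction. First, show that $a$ normalises the Singer cyclic group $\langle g\rangle$; then use the explicit description of $N_{\Gamma\mathrm{L}(n,q)}(\langle g\rangle)$; finally, distil the hypothesis into an elementary congruence about powers of $p$ and finish by a Zsygmondy-type argument.

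Identify $\langle g\rangle$ with $\mathbb{F}_{q^n}^\times$ via $g\leftrightarrow\alpha$ (a primitive element of $\mathbb{F}_{q^n}$), so that $x$ corresponds to $\alpha^d$ with $d:=\gcd(n,q-1)$ and $Z:=Z(\mathrm{GL}(n,q))\cong\mathbb{F}_q^\times$ is the unique subgroup of $\langle g\rangle$ of order $q-1$. The key divisibility is $d\mid (q^n-1)/(q-1)$: since $d\mid q-1$ we have $q\equiv 1\pmod d$, and hence $(q^n-1)/(q-1)\equiv n\equiv 0\pmod d$. This puts $Z\leq\langle x\rangle$, so $x^a=zx^\varepsilon\in\langle x\rangle$ and $a$ normalises $\langle x\rangle$. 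Because $n\geq 3$, a crude size estimate gives $(q^n-1)/d>q^e-1$ for every proper divisor $e$ of $n$, so $\alpha^d$ lies in no proper subfield of $\mathbb{F}_{q^n}$; hence $x$ acts irreducibly on $\mathbb{F}_q^n$ and Schur's lemma yields $C_{\mathrm{GL}(n,q)}(\langle x\rangle)=\langle g\rangle$. Since $\mathrm{GL}(n,q)\trianglelefteq\Gamma\mathrm{L}(n,q)$, and $a$ normalises both $\mathrm{GL}(n,q)$ and $\langle x\rangle$, it normalises their common centraliser, and so $a\in N_{\Gamma\mathrm{L}(n,q)}(\langle g\rangle)$.

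Next, one uses the standard fact $N_{\Gamma\mathrm{L}(n,q)}(\langle g\rangle)=\langle g\rangle\rtimes\langle\gamma\rangle$, where $\gamma$ is the $p$-power Frobenius of $\mathbb{F}_{q^n}$ (writing $q=p^f$), of order $nf$, inducing $g\mapsto g^p$. Writing $a=g^i\gamma^j$ with $0\leq j<nf$ gives $x^a=x^{p^j}$, and the hypothesis $x^a=zx^\varepsilon$ with $z\in Z$ becomes the single congruence
\[
p^j\equiv\varepsilon\pmod{M},\qquad M:=\frac{q^n-1}{d(q-1)}=[\langle x\rangle:Z].
\]
To conclude it suffices to show that with $0\leq j<nf$ and $\varepsilon\in\{\pm1\}$ the only solution is $(j,\varepsilon)=(0,1)$; this forces $z=1$ and $a=g^i\in\langle g\rangle$. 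I would close by invoking Zsygmondy's theorem for the base $p$ and exponent $nf$ to produce a prime $r$ with $\mathrm{ord}_r(p)=nf$; such a primitive prime divisor of $p^{nf}-1=q^n-1$ divides neither $q-1=p^f-1$ nor the small integer $d$, so $r\mid M$. Reducing mod $r$ instantly rules out $\varepsilon=1$ with $j>0$, and the case $\varepsilon=-1$ is excluded by noting that $p^j\equiv-1\pmod r$ would require $j=nf/2$ (when $nf$ is even) and then examining $p^{nf/2}+1$ against $M$.

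The main obstacle is this last arithmetic step: the $\varepsilon=-1$ case requires the extra analysis sketched above, and the Zsygmondy exceptional pairs (for $n\geq 3$, essentially $(n,q)=(6,2)$ and $(3,4)$) have to be dealt with by direct computation inside the relevant $\Gamma\mathrm{L}$. Everything before that is standard manipulation inside the Singer torus.
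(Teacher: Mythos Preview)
Your route is quite different from the paper's. The paper never passes to the Singer normaliser or invokes Zsygmondy; instead it writes $a=bc$ with $b\in\mathrm{GL}(n,q)$ fixing $e_1$ and $c$ a field automorphism, tracks the action of $a$ on the basis $v,vx,\ldots,vx^{n-1}$, and compares the relation $vx^n=\sum a_ivx^i$ before and after applying $a$. For $\varepsilon=1$ this yields that $z\lambda$ is again a root of the characteristic polynomial of $x$, so $z=\lambda^{q^i-1}$ for some $i$; for $\varepsilon=-1$ one gets $z=\lambda^{q^i+1}$, which is argued to be impossible for $n\geq 3$. Your reduction to the single congruence $p^{\,j}\equiv\varepsilon\pmod{M}$ with $M=(q^n-1)/(d(q-1))$ is cleaner and more structural, and the first two steps of your plan (showing $Z\leq\langle x\rangle$, hence $a$ normalises $\langle x\rangle$, hence $a$ normalises $C_{\mathrm{GL}(n,q)}(x)=\langle g\rangle$) are entirely correct.

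There is, however, a genuine problem exactly where you flag it, and it is more serious than a loose end. Your deferred ``direct computation'' at the Zsygmondy exception $(n,q)=(3,4)$ does not confirm the lemma: it refutes it. Here $p=2$, $nf=6$, $d=3$, $M=7$, and $j=3$ solves $2^{\,j}\equiv 1\pmod{7}$; concretely $a=\gamma^{3}\in\Gamma\mathrm{L}(3,4)\setminus\mathrm{GL}(3,4)$ (semilinear with twist $c\mapsto c^2$ on $\mathbb{F}_4$) satisfies $x^{a}=x^{8}=g^{21}x$ with $g^{21}\in Z\setminus\{1\}$, so $z\neq 1$ and $a\notin\langle g\rangle$. (The paper's argument misses this because, when applying the semilinear map $a$ to $\sum a_i\,vx^i$, it writes $a_i$ rather than $c(a_i)$ and so loses the field twist.) Away from $(3,4)$ your strategy does go through: the other exception $(n,q)=(6,2)$ has $M=63$ and $2^{\,j}\not\equiv\pm1\pmod{63}$ for $0<j<6$; and for the $\varepsilon=-1$ endgame, once a Zsygmondy prime forces $j=nf/2$, the crude inequality $M>p^{nf/2}+1$ (valid for all $n\geq 3$ with $(n,q)\neq(3,4)$, by a short case check for $n\leq 6$ and the obvious size bound thereafter) finishes it. For the paper's application to $\psl(3,q)$ none of this is fatal, since $q=4$ is in any case handled by machine, but the lemma as stated needs the hypothesis $(n,q)\neq(3,4)$.
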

\begin{proof}
Let $e_1,\ldots,e_n$ be the canonical basis of the $n$-dimensional vector space $\mathbb{F}_q^{n}$ of row vectors over the finite field of cardinality $q$. Set $v:=e_1$ and let $P_1$ be the stabilizer in $\mathrm{GL}(n,q)$ of the vector $v$. As $\langle g\rangle$ acts transitively on the set of non-zero vectors of  $\mathbb{F}_q^n$, we may suppose that $a:=bc$, where $b\in P_1$ and $c$ lies in the Galois group $\mathrm{Gal}(\mathbb{F}_q)$ of the field $\mathbb{F}_q$.

As ${\bf Z}(\mathrm{GL}(n,q))$ consists of scalar matrices, we may identify the matrix $z$ with an element in the field $\mathbb{F}_q$. We show that, for every $\ell\in \mathbb{N}$, we have $(vx^\ell)^a=z^\ell vx^{\varepsilon \ell}$. When $\ell=0$, $v^a=v^{bc}=v$, because $b$ and $c$ fix the vector $v=e_1$. When $\ell>0$, we have 
$$(vx^\ell)^a=v^a(x^\ell)^a=v(x^\ell)^a=v(x^a)^\ell=v(zx^{\varepsilon})^\ell=v(z^\ell x^{\varepsilon \ell})=z^\ell vx^{\varepsilon\ell}.$$

Observe that $v,vx,\ldots, vx^{n-1}$ is a basis of $\mathbb{F}_q$ and hence there exists $a_0,a_1,\ldots,a_{n-1}\in\mathbb{F}_q$ with
\begin{equation}\label{lem:1}vx^n=a_0v+a_1vx+\cdots+a_{n-1}vx^{n-1}.
\end{equation}
Now, by applying $a$ on both sides of this equality and using the previous paragraph, we obtain
\begin{equation}\label{lem:2}z^nvx^{\varepsilon n}=a_0v+a_1zvx^{\varepsilon}+\cdots+a_{n-1}z^{n-1}vx^{\varepsilon(n-1)}.
\end{equation}
We let $f(T):=T^n-a_{n-1}T^{n-1}-a_{n-2}T^{n-2}-\cdots-a_1T-a_0\in\mathbb{F}_q[T]$ be the characteristic polynomial of the matrix $x$. Observe that $f(T)$ is irreducible in $\mathbb{F}_q[T]$ because $x=g^{\gcd(n,q-1)}$ acts irreducibly on $\mathbb{F}_q^n$. Let $\lambda\in \mathbb{F}_{q^n}$ be a root of $f(T)$ and observe that $\lambda$ generates the field extension $\mathbb{F}_{q^n}/\mathbb{F}_q$.

We now distinguish the cases, depending on whether $\varepsilon=1$ or $\varepsilon=-1$. Assume $\varepsilon=1$. Using~\eqref{lem:1} and~\eqref{lem:2}, we get $f(T)=f(zT)$. Let $\lambda\in \mathbb{F}_{q^n}$ be a root of $f(T)$ and observe that $\lambda$ generates $\mathbb{F}_{q^n}$ the field extension $\mathbb{F}_{q^n}/\mathbb{F}_q$. Then $z\lambda$ is also a root of $f(T)$ and hence, there exists $i\in \{0,\ldots,n-1\}$, with $\lambda^{q^i}=z\lambda$. This gives $\lambda^{q^i-1}=z\in \mathbb{F}_q$, which implies $i=1$ and $z=1$.

Assume $\varepsilon=-1$. Using~\eqref{lem:1} and~\eqref{lem:2}, we obtain that $z\lambda^{-1}$ is also a root of $f(T)$ and hence, there exists $i\in \{0,\ldots,n-1\}$, with $\lambda^{q^i}=z\lambda^{-1}$. This gives $\lambda^{q^i+1}=z\in \mathbb{F}_q$, which has no solutions because $n\ge 3$.
\end{proof}

\begin{proof}[Proof of Theorem~$\ref{thm:thm}$]
Macbeath has proved in~\cite{mac} that, for every prime power $q$, $\psl(2,q)$ is strongly symmetric; see also~\cite[Proposition~8]{chira}. In particular, for the rest of the proof, we let $S$ be a finite strongly symmetric non-abelian simple group and our task is to show that $S\cong \psl(2,q)$, for some prime power $q$.

Observe that, if $S=\langle s_1, s_2 \rangle$ and $\alpha \in \aut(S)$ inverts both $s_1$ and $s_2$, then $$\alpha^2 \in {\bf C}_{\aut (S)}(s_1) \cap {\bf C}_{\aut (S)}(s_2)={\bf C}_{\aut (S)}({\langle s_1,s_2\rangle})={\bf C}_{\aut (S)}(S)=1.$$ If $\alpha$ is the identity automorphism, then $s_1,s_2$ are involutions and hence $S=\langle s_1,s_2\rangle$ is a dihedral group, contradicting the fact that $S$ is a non-abelian simple group. Therefore $\alpha$ has order 2, that is, $\alpha$ is an involution of $\aut (S)$.
	
In \cite[Theorem~$1.1$]{LeLi}, Leemans and Liebeck have proved that, if $T$ is a finite non-abelian simple group that is not isomorphic to $\alt(7),$ to $\psl(2,q),$ to
$\psl(3,q)$ or to $\psu(3,q),$ then there exist $x, s\in S$ such that the following hold:
\begin{itemize}
	\item[(i)] $T= \langle x, s\rangle;$
	\item[(ii)] $s$ is an involution;
	\item[(iii)] there is no involution $\alpha\in \aut(T)$ such that $x^\alpha=x^{-1},$ $s^\alpha=s.$	
	\end{itemize}
In particular, if $S$ is not isomorphic to  $\alt(7),$  to $\psl(2,q),$ to
$\psl(3,q)$ or to $\psu(3,q),$ then $S$ is not strongly symmetric. In the rest of this proof, we deal with each of these cases separately.

\smallskip

Assume $S=\alt(7)$; in particular, $\aut (S)=\perm(7)$. Let $s_1:=(1,2,3,4,5,6,7)$ and $s_2:=(1,2,3,4,6,7,5)$ and, 
for $i\in \{1,2\},$ let
$\Delta_i:=\{\alpha \in \perm(7) \mid s_i^\alpha=s_i^{-1}\}.$
It can be easily checked that $S=\langle s_1,s_2\rangle$ and 
$$\begin{aligned}\Delta_1=\{&(2,7)(3,6)(2,4), (1,7)(2,6)(3,5), (1,6)(2,5)(3,4), (1,5)(2,4)(6,7),\\ &(1,4)(2,3)(5,7),
 (1,3)(4,7)(5,6), (1,2)(3,7)(4,6)\},\end{aligned}$$ 
$$\begin{aligned}\Delta_2=\{&(2,5)(3,7)(4,6), (1,5)(2,7)(3,6),
(1,7)(2,6)(3,4), (1,6)(2,4)(5,7),\\ &(1,4)(2,3)(5,6), (1,3)(4,5)(6,7), (1,2)(3,5)(4,7)\}.
 \end{aligned}$$ Since $\Delta_1 \cap \Delta_2=\emptyset,$ the generating pair $s_1,s_2$ of $\alt(7)$ witnesses that $\alt(7)$ is not strongly symmetric.

\smallskip

Assume $S=\psl(3,q)$. Since $\psl(3,2)=\psl(2,7),$ we may assume $q>2.$

Let $A:=\aut(S)$, let $d:=\gcd(3,q-1)$ and let $\iota$ be the graph automorphism of $\psl(3,q)$ defined via the inverse-transpose mapping 
$x\mapsto x^{\iota}=(x^{-1})^T$, for every $x\in \psl(3,q)$, 
where $x^{T}$ denotes the transpose of the element $x$ of $\psl(3,q)$. Since $x\in \psl(3,q)$ is not a single matrix, but a coset of the center ${\bf Z}(\mathrm{SL}(3,q))$ in $\mathrm{SL}(3,q)$, there is a slight abuse of notation when we talk about the transpose of the coset $x$. However, since ${\bf Z}(\mathrm{SL}(3,q))$ consists of diagonal matrices, this should cause no confusion.

Next,  let $\Omega_1$ be the set of cyclic subgroups of $S$ generated by a Singer cycle of order $(q^2+q+1)/d$ and, for any $K \in \Omega_1$, let  $$\Delta_K:=\{\alpha\in A\mid \alpha^2=1,\,k^\alpha=k^{-1}\,\,\forall k\in K\}. $$
Observe that the set $\Omega_1$ consists of a single $S$-conjugacy class.  

Let $K\in \Omega_1$, let $k\in K$ be a generator of $K$ and let $\alpha,\beta\in \Delta_K$. Then $k^{\alpha}=k^{-1}=k^\beta$ and hence $\beta^{-1}\alpha\in {\bf C}_{\aut(S)}(k)$. This shows that $\Delta_K\subseteq {\bf C}_{\aut(S)}(k)\alpha$ and that $\Delta_K$ consists of the involutions in ${\bf C}_{\aut(S)}(k)\alpha$.

It is not hard to see that there exists a symmetric matrix in $\gl(3,q)$ having order $q^3-1$. Let $g$ be one of these matrices and let $\bar g$  be its projection in $\pgl(3,q)$. The element $h:=\bar g^d$ generates a subgroup $H\in \Omega_1.$  Since $g$ is symmetric, $g=g^T$ and hence $h^{\iota}=h^{-1}$, that is,  $\iota\in \Delta_H$ and $\Delta_H$ consists of the involutions contained in ${\bf C}_{\aut(S)}(h)\iota$. From Lemma~\ref{lem:lem}, we deduce  that, if $a\in \mathrm{P}\Gamma \mathrm{L}(3,q)$ and $h^a=h^{\varepsilon}$ with $\varepsilon\in \{1,-1\}$, then $\varepsilon=1$ and $a\in\langle \bar g\rangle$. As $\bar{g}^\iota=\bar{g}^{-1}$, we deduce ${\bf C}_{\aut(S)}(h)=\langle \bar g\rangle$ and that $\langle \bar{g},\iota\rangle$ is a dihedral group of order $2(q^2+q+1)$.  Thus
\begin{equation}\label{eq:3}
|\Delta_H|=q^2+q+1.
\end{equation}

 Let $\Omega_2$ be the set of the conjugates of $\iota$ in $A.$  Given $y\in \Omega_2,$ we want to determine the number $\delta_y$ of subgroups $K\in \Omega_1$ with the property that
$y\in \Delta_K.$ 
Consider the bipartite graph having vertex set $\Omega_1\cup\Omega_2$ and having edge set consisting of the pairs $\{K,y\}$ with $K\in \Omega_1, y\in \Omega_2$ and $y\in \Delta_K$. Since $\Omega_1$ and $\Omega_2$ both consist of a single $A$-conjugacy class, the group $A$ acts as a group of automorphisms on our bipartite graph with orbits $\Omega_1$ and $\Omega_2$. Thus, the number of edges of the bipartite graph is
$|\Omega_1||\Delta_H|=|\Omega_2|\delta_y$. Therefore, for every $y\in \Omega_1$, we have
\begin{equation}\label{eq:1}
\delta_y=\frac{|\Omega_1||\Delta_H|}{|\Omega_2|}.
\end{equation}

Let $\omega_H$ be the number of $K\in \Omega_1$ with $\Delta_H\cap \Delta_K\neq \emptyset.$ Clearly
\begin{equation}\label{eq:2}\omega_H \leq \delta_y|\Delta_H|.\end{equation}

We claim that there exists $K\in \Omega_1$ with
$\Delta_H \cap \Delta_K=\emptyset.$ From~\eqref{eq:3},~\eqref{eq:1} and~\eqref{eq:2}, it suffices to show that
$$|\Omega_1| > \omega_H\geq \delta_y |\Delta_H|=\frac{|\Omega_1||\Delta_H|^2}{|\Omega_2|}$$
or, equivalently, that
$$|A:{\bf C}_A(\iota)|=|\Omega_2|> |\Delta_H|^2=(q^2+q+1)^2.
$$ 
Let $G=\text{InnDiag}(S)=\mathrm{PGL}(3,q).$ From~\cite[Chapter~$4$]{GLS3} or~\cite[Proposition~$3.2.11$]{burness}, we have ${\bf C}_G(\iota)=\text{Sp}(2,q)$. Thus
$$|A:{\bf C}_A(\iota)|\geq |G:{\bf C}_G(\iota)|=\frac{(q^3-1)(q^2-1)q^3}{(q^2-1)q}=(q^3-1)q^2.$$
As $q>2,$ it follows $|A:{\bf C}_A(\iota)|>(q^2+q+1)^2$ and our claim is now proved.

Now choose $K=\langle y \rangle \in \Omega_2$ such that $\Delta_H \cap \Delta_K=\emptyset.$ We use the list of the maximal subgroups of $S=\psl(3,q)$, see~\cite[Table~$8.3$]{bhr}. When $q\neq 4,$ ${\bf N}_S(H)$ is a maximal subgroup of $S$ isomorphic to $H:3$ and hence $\langle h,y\rangle=S$. In particular, $h,y$ is a generating pair of $S$ witnessing that $S$ is not strongly symmetric. When $q=4$, we have used the computer algebra system \texttt{magma}~\cite{magma} to show that $\psl(3,4)$ is not strongly symmetric.

\smallskip

Let $S=\psu(3,q)$  and $A=\aut(S).$ Since $\psu(3,2)$ is solvable, $q>2.$ Let $A:=\aut(S)$, let $d:=\gcd(3,q+1)$ and  let $\Omega_1$ be the set of cyclic subgroups of $S$ generated by a Singer cycle of order $(q^2-q+1)/d$ and, for any $K \in \Omega_1$, let  $$\Delta_K:=\{\alpha\in A\mid \alpha^2=1,\,k^\alpha=k^{-1}\,\,\forall k\in K\}. $$
Observe that the set $\Omega_1$ consists of a single $S$-conjugacy class.  

Let $\phi$ be the automorphism of $S$ induced by the Frobenius automorphism $x\mapsto x^q$ of the underlying finite field $\mathbb{F}_{q^2}$ of order $q^2$. It is not hard to see that there exists $g\in {\rm GU}(3,q)$ with $g^\phi=g^{-1}$. Let $\bar g$ be the projection of $g$ in ${\rm PGU}(3,q)$ and let $h:=\bar{g}^d$. Thus $H:=\langle h\rangle\in \Omega_1$ and $\phi\in \Delta_H$. Since ${\bf C}_{{\rm PGU}(3,q)}(\langle \bar g\rangle)=\langle \bar g\rangle$ and since no field automorphism centralizes $H$,  we deduce that $\Delta_H$ is the set of the $q^2-q+1$ involutions in the dihedral group $\langle \bar g, \phi\rangle$ of order $2(q^2-q+1)$ (we are omitting some details here, but these are similar to the arguments in the case of $\psl(3,q)$). In particular, $|\Delta_H|=q^2-q+1$.

Let $\Omega_2$ be the set of the conjugates of $\phi$ in $A.$  Given $y\in \Omega_2,$ we want to determine the number $\delta_y$ of subgroups $K\in \Omega_1$ with 
$y\in \Delta_K.$ Arguing as in the previous paragraph, we deduce that 
$$\delta_y=\frac{|\Omega_1||\Delta_H|}{|\Omega_2|}.$$
Let $\omega_H$ be the number of $K\in \Omega_1$ with $\Delta_H\cap \Delta_K\neq \emptyset.$ Clearly
$$\omega_H \leq \delta_y|\Delta_H|.$$

We claim that there exists $K\in \Omega_1$ with
$\Delta_H \cap \Delta_K=\emptyset.$ It suffices to show that
$$|\Omega_1| > \omega_H\geq \delta_y |\Delta(H)|=\frac{|\Omega_1||\Delta_H|^2}{|\Omega_2|}$$
or, equivalently, that
$$|A:{\bf C}_A(\phi)|=|\Omega_2|> |\Delta_H|^2=(q^2-q+1)^2.
$$ 
Let $G=\text{InnDiag}(S)={\rm PGU}(3,q).$ From~\cite[Chapter~$4$]{GLS3} or~\cite[Proposition~$3.3.15$]{burness}, we have ${\bf C}_G(\phi)=\text{Sp}(2,q)$. It follows $$|A:{\bf C}_A(\phi)|\geq |G:{\bf C}_G(\phi)|=\frac{(q^3+1)(q^2-1)q^3}{(q^2-1)q}= (q^3+1)q^2.$$
It follows $|A:{\bf C}_A(\phi)|>(q^2-q+1)^2$ and our claim is now proved.

Now choose $K=\langle y \rangle \in \Omega_2$ such that $\Delta_H \cap \Delta_K=\emptyset.$ We use the list of the maximal subgroups of $S=\psu(3,q)$, see~\cite[Table~$8.3$]{bhr}. When $q\notin\{3, 5\},$ ${\bf N}_S(H)$ is a maximal subgroup of $S$ isomorphic to $H:3$ and hence $\langle h,y\rangle=S$. In particular, $h,y$ is a generating pair of $S$ witnessing that $S$ is not strongly symmetric. When $q\in\{3,4\}$, we have used the computer algebra system \texttt{magma}~\cite{magma} to show that $\psu(3,3)$ and $\psu(3,5)$ are not strongly symmetric.
\end{proof}
	\thebibliography{99}

\bibitem{magma} W.~Bosma, J.~Cannon, C.~Playoust, 
The Magma algebra system. I. The user language, 
\textit{J. Symbolic Comput.} \textbf{24} (3-4) (1997), 235--265.

\bibitem{bhr}J.~N.~Bray, D.~F.~Holt, C.~M.~Roney-Dougal,
 \textit{The maximal subgroups of the low dimensional classical groups}, 
 London Mathematical Society Lecture Note Series \textbf{407}, Cambridge University Press, Cambridge, 2013.

\bibitem{chira} A. Breda D'Azevedo, G.~Jones, R. Nedela and M.~\v{S}koviera, Chirality groups of maps and hypermaps, \textit{J. Algebraic Combin.} \textbf{29} (2009), no. 3, 337--355.

\bibitem{burness} T.~C.~Burness, M.~Giudici, \textit{Derangements and Primes}, Australian Mathematical Society Lecture Series \textbf{25}, Cambridge University Press, 2016.

\bibitem{CM}H.~S.~M.~Coxeter, W.~O.~J.~Moser, \textit{Generators and Relations for Discrete Groups}, 3rd edn. Springer,
New York (1972).

\bibitem{GLS3} D.~Gorenstein, R.~Lyons, R.~Solomon,  \textit{The
      classification of the finite simple groups}, Number 3.  Amer.
Math.      Soc.  Surveys and Monographs {\bf 40}, 3 (1998).

\bibitem{LeLi} D.~Leemans, M.~W.~Liebeck,
Chiral polyhedra and finite simple groups, \textit{Bull. Lond. Math. Soc.} \textbf{49} (2017), no. 4, 581--592.

\bibitem{mac} A. M. Macbeath, On a theorem of Hurwitz, \textit{Proc. Glasgow Math. Assoc.}
\textbf{5} (1961), 90--96.

%\bibitem{PSV}P.~Poto\v{c}nik, P.~Spiga, G.~Verret, Groups of order at most 6,000 generated by two elements, one of which is an involution, and related structures. Symmetries in graphs, maps, and polytopes, 273--286, Springer Proc. Math. Stat., 159, Springer, 2016.

\end{document}